\theoremstyle{plain}
\newtheorem{theorem}{Theorem}
\newtheorem{lemma}{Lemma}
\theoremstyle{definition}
\newtheorem{definition}{Definition}
\newcommand{\B}{\mathbb}
\newcommand{\gf}{\varphi}
\newcommand{\m}[1]{\left(\begin{matrix} #1 \end{matrix}\right)}             
  \def\vhrulefill#1{\leavevmode\leaders\hrule\@height#1\hfill \kern\z@}
\begin{document}

\title{The maximal order of hyper-($b$-ary)-expansions}

\author{Michael Coons}
\address{School of Mathematical and Physical Sciences\\
The University of Newcastle\\
Callag\-han, New South Wales\\
Australia}
\email{Michael.Coons@newcastle.edu.au}

\author{Lukas Spiegelhofer}
\address{Institut f\"ur Diskrete Mathematik und Geometrie, Technische Universit\"at Wien, Wien, Austria}
\email{lukas.spiegelhofer@tuwien.ac.at}
\thanks{The research of M.~Coons was supported by Australian Research Council grant DE140100223.
Lukas Spiegelhofer was supported by the Austrian Science Fund (FWF), projects I 1751-N26 and F 5502-N26, the latter of which is a part of the Special Research Program
``Quasi Monte Carlo Methods: Theory and Applications''.}
\date{\today}

\subjclass[2010]{Primary 05A16, Secondary 11B37, 11B39}
\keywords{Stern's diatomic sequence, hyper base expansions, maximal order}

\begin{abstract} Using methods developed by Coons and Tyler, we give a new proof of a recent result of Defant, by determining the maximal order of
the number of hyper-($b$-ary)-expansions of a nonnegative integer $n$
for general integral bases $b\geqslant 2$.
\end{abstract}

\maketitle

\vspace{-.5cm}
\section{Introduction}

If $b\geqslant 2$ is a positive integer and $n\geqslant 0$ is a nonnegative integer, then a {\em hyper-($b$-ary)-expansion of $n$} is an expansion $$n=\sum_{i\geqslant 0} a_ib^i,$$ where $a_i\in\{0,1,\ldots,b\}$ and $a_i=0$ for all but finitely many indices $i$. In contrast, the standard base-$b$ expansion requires $a_i\in\{0,1,\ldots,b-1\}.$ 

For $n\geqslant 1$, we denote by $s_b(n)$ the number of hyper-($b$-ary)-expansions of $n-1$, moreover, we define $s_b(0)=0$.
For example, $\{s_2(n)\}_{n\geqslant 0}$ is the classical diatomic sequence of Stern.
This sequence satisfies $s_2(0)=0$, $s_2(1)=1$, and for $n\geqslant 0$,
$$s_2(2n)=s_2(n) \qquad\mbox{and}\qquad s_2(2n+1)=s_2(n)+s_2(n+1).$$
Similarly, for the general sequence $\{s_b(n)\}_{n\geqslant 0}$, it is straightforward to verify that $s_b(0)=0$, $s_b(1)=1$, and
$$s_b(bn)=s_b(n),\quad s_b(bn+1)=s_b(n)+s_b(n+1)\quad\mbox{and}\quad s_b(bn+i)=s_b(n+1)$$ for $n\geqslant 0$ and $i=2,\ldots,b-1$.

Recently, answering a question of Calkin and Wilf~\cite{CW1998}, Coons and Tyler \cite{CT2014} determined the maximal order of Stern's diatomic sequence. Very recently, Defant~\cite{Dpre} gave an extension of their result to the functions $s_b(n)$. In this short paper, we use a slight variant of the method of Coons and Tyler, to give a new (and much shorter) proof of Defant's result.

\begin{theorem}[Defant]\label{main}
If $b\geqslant 2$ is an integer, then $$\limsup_{m\to\infty}\frac{s_b(m)}{m^{\log_b \varphi}}=\frac{\varphi^{\log_b(b^2-1)}}{\sqrt{5}},$$
where $\varphi=(\sqrt 5+1)/2$ is the golden ratio.
\end{theorem}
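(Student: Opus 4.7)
The plan is to match a lower bound by an explicit construction with an upper bound obtained by simultaneous induction, following the Coons--Tyler template.

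For the lower bound, I would set $m_k := (b^{2k+2}-1)/(b^2-1) = 1+b^2+b^4+\cdots+b^{2k}$, whose base-$b$ representation is the alternating word $(10101\cdots01)_b$ of length $2k+1$. Since $m_{k+1} = b^2m_k + 1 = b(bm_k)+1$, two applications of the defining recurrence give $s_b(m_{k+1}) = 2s_b(m_k) + s_b(m_k+1)$ and $s_b(m_{k+1}+1) = s_b(m_k) + s_b(m_k+1)$. Hence the pair $(u_k, v_k) := (s_b(m_k), s_b(m_k+1))$ starts at $(u_0,v_0)=(1,1)$ and evolves under the matrix $\s{2 & 1 \\ 1 & 1}$, whose eigenvalues are $\varphi^{\pm 2}$; induction yields $(u_k, v_k) = (F_{2k+2}, F_{2k+1})$. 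Combining Binet's formula with $m_k \sim b^{2k+2}/(b^2-1)$ then gives
\[ \frac{s_b(m_k)}{m_k^{\log_b\varphi}} \longrightarrow \frac{(b^2-1)^{\log_b\varphi}}{\sqrt{5}} = \frac{\varphi^{\log_b(b^2-1)}}{\sqrt{5}}, \]
establishing $\limsup\geq\varphi^{\log_b(b^2-1)}/\sqrt{5}$.

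For the upper bound, I would prove by simultaneous induction on $n$ that
\[ P_n := \max_{m\in[b^n,b^{n+1})} s_b(m) = F_{n+2}, \qquad Q_n := \max_{m\in[b^n,b^{n+1})}\bigl(s_b(m) + s_b(m+1)\bigr) = F_{n+3}. \]
Each $m \in [b^{n+1}, b^{n+2})$ is written as $m = bm' + a$ with $m' \in [b^n, b^{n+1})$ and $a \in \{0,\ldots,b-1\}$; the defining recurrence expresses $s_b(m)$ and $s_b(m)+s_b(m+1)$ as specific linear combinations of $s_b(m')$ and $s_b(m'+1)$ in the cases $a=0$, $a=1$, and $a\geq 2$, each bounded by $P_n + Q_n = F_{n+4}$, and the alternating pattern $m_k$ shows equality is attained.

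The principal obstacle is converting $P_n = F_{n+2}$ into a matching bound on the $\limsup$: the crude combination $s_b(m) \le F_{n+2}$ with $m \ge b^n$ yields only $\limsup s_b(m)/m^{\log_b\varphi} \le \varphi^2/\sqrt{5}$, which strictly exceeds the target for every $b \ge 2$. The remedy is to refine the induction to also track the \emph{minimum} $m$ attaining $P_n$; a careful analysis of the maximizing digit patterns shows this minimum is asymptotic to $b^{n+2}/(b^2-1)$ (exactly $m_k$ when $n=2k$, with a mild parity-dependent modification for odd $n$). Together with $P_n=F_{n+2}$ and the observation that $s_b(m)/m^{\log_b\varphi}$ on $[b^n,b^{n+1})$ is maximized at this minimum, this forces $\limsup s_b(m)/m^{\log_b\varphi} \le \varphi^{\log_b(b^2-1)}/\sqrt{5}$, matching the lower bound and completing the proof.
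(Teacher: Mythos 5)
Your lower bound is correct and complete: along $m_k=(b^{2k+2}-1)/(b^2-1)$ the pair $(s_b(m_k),s_b(m_k+1))$ does evolve under $\left(\begin{smallmatrix}2&1\\1&1\end{smallmatrix}\right)$, giving $s_b(m_k)=F_{2k+2}$, and the asymptotics yield the claimed constant. This is essentially the paper's extremal sequence $a_{2k+2}$. Likewise, the simultaneous induction on $P_n$ and $Q_n$ is sound and recovers the paper's Lemma~1 (Defant's result that $\max_{b^n\leqslant m<b^{n+1}}s_b(m)=F_{n+2}$).

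The upper bound, however, has a genuine gap, located exactly where you flag the ``principal obstacle.'' Knowing $P_n=F_{n+2}$ together with the location of the \emph{smallest} maximizer $a_{n+2}\approx b^{n+2}/(b^2-1)$ does not imply that $s_b(m)/m^{\log_b\varphi}$ is maximized there: for $b^n\leqslant m<a_{n+2}$ this data only gives $s_b(m)\leqslant F_{n+2}-1$, and an $m$ near $b^n$ with $s_b(m)$ close to $F_{n+2}$ would still produce a ratio near $\varphi^2/\sqrt5$, which strictly exceeds the target for every $b\geqslant 2$. Ruling this out requires a pointwise bound on $s_b(m)$ that degrades as $m$ moves to the left of the maximizer, and that is precisely what the paper supplies: it builds the piecewise linear majorant $h$ through the points $(\tilde a_k,F_k)$ with $\tilde a_k=(b^k-1)/(b^2-1)$ and proves $s_b(m)\leqslant h(m)$ for all $m$ by an induction over residue classes of $m$, using the functional equation $h(x)+h(bx+1/(b+1))=h(b^2x+1)$, the concavity of $h$, and a strengthened hypothesis for $m\equiv b+1\bmod b^2$. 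That inductive majorization (the paper's Lemma~\ref{lem:bounded_by_polygon}) is the technical heart of the argument and cannot be replaced by tracking only the maximum value and its first point of attainment; your sentence ``the observation that $s_b(m)/m^{\log_b\varphi}$ on $[b^n,b^{n+1})$ is maximized at this minimum'' is exactly the unproved step.
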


\section{Preliminaries}
Let $\{F_k\}_{k\geqslant 0}=0,1,1,2,\ldots$ be the sequence of Fibonacci numbers.

\begin{lemma}[Defant] Let $k\geqslant 2$ and $n$ be positive integers. Then $$\max_{b^{k-2}\leqslant n<b^{k-1}}s_b(n)=F_{k}.$$
Moreover, if $a_k$ denotes the smallest $n$ in the interval $[b^{k-2},b^{k-1})$ for which this maximum is attained, then
\begin{equation}\label{eqn:ak_explicit}
a_k=\frac{b^{k}-1}{b^2-1}+\left(\frac{1-(-1)^{k}}{2}\right)\frac b{b+1}.\end{equation}
\end{lemma}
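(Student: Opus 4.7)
The plan is to encode the recurrence for $s_b$ via $2 \times 2$ matrices. Setting $v(n) := (s_b(n), s_b(n+1))^T$, the three recurrence cases become $v(bm+i) = M_i v(m)$, where
\[
M_0 = \s{1 & 0 \\ 1 & 1}, \quad M_1 = \s{1 & 1 \\ 0 & 1}, \quad M_i = \s{0 & 1 \\ 0 & 1} \text{ for } 2 \leq i \leq b-1.
\]
Since $v(j) = (1,1)^T$ for every $1 \leq j \leq b-1$, an integer $n \in [b^{k-2}, b^{k-1})$ with base-$b$ digits $d_0, d_1, \ldots, d_{k-2}$ (least significant first) satisfies $v(n) = M_{d_0} M_{d_1} \cdots M_{d_{k-3}} (1,1)^T$, a product of $k-2$ matrices applied to $(1,1)^T$.

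For the upper bound $s_b(n) \leq F_k$, I would prove by induction on $L$ the following invariant: any $(x,y)^T$ obtained from $(1,1)^T$ by applying $L$ matrices from $\{M_0, \ldots, M_{b-1}\}$ satisfies $\max(x,y) \leq F_{L+2}$ and $\min(x,y) \leq F_{L+1}$. The key step is that $M_0$ and $M_1$ send $(x,y)^T$ to a vector whose new maximum is $x + y \leq F_{L+1} + F_{L+2} = F_{L+3}$, while the new minimum equals one of $x$ or $y$ and is thus $\leq F_{L+2}$; the collapsing action of $M_i$ for $i \geq 2$ sends $(x,y)^T$ to $(y,y)^T$, which trivially respects both bounds. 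Taking $L = k-2$ yields $s_b(n) \leq F_k$ for every $n \in [b^{k-2}, b^{k-1})$.

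Next, I would verify $s_b(a_k) = F_k$ by induction on $k$. The explicit formula simplifies to $a_k = b\, a_{k-1} + 1$ for $k$ odd and $a_k = 1 + b^2 a_{k-2}$ for $k$ even; these translate into matrix identities $v(a_k) = M_1 v(a_{k-1})$ respectively $v(a_k) = M_1 M_0 v(a_{k-2})$. A short induction using $F_k = F_{k-1} + F_{k-2}$ then establishes $v(a_k) = (F_k, F_{k-1})^T$ when $k$ is even and $v(a_k) = (F_k, F_{k-2})^T$ when $k$ is odd, so in particular $s_b(a_k) = F_k$.

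The main obstacle is the smallest-$n$ assertion, for which I would run a second induction on $k$. Any $n \in [b^{k-2}, b^{k-1})$ with $s_b(n) = F_k$ must have last digit $d_0 = 1$, since $d_0 = 0$ or $d_0 \geq 2$ forces $s_b(n) \in \{s_b(m), s_b(m+1)\} \leq F_{k-1} < F_k$. Writing $n = bm + 1$, the equality $s_b(m) + s_b(m+1) = F_k$ together with the invariant at $L = k-3$ forces $\{s_b(m), s_b(m+1)\} = \{F_{k-1}, F_{k-2}\}$, splitting into two sub-cases according to which of $m, m+1$ carries the value $F_{k-1}$. Applying the inductive hypothesis (the smallest $m' \in [b^{k-3}, b^{k-2})$ with $s_b(m') = F_{k-1}$ is $a_{k-1}$) together with the explicit values of $v(a_{k-1})$ and $v(a_{k-1}-1)$ computed above, one finds the minimising $m$ is $a_{k-1}$ when $k$ is odd and $a_{k-1} - 1 = b\, a_{k-2}$ when $k$ is even, yielding $n = a_k$ in both parities via the recurrences. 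The delicate point is excluding smaller $m$ in the non-optimal sub-case: this hinges on the parity-dependent fact that the second component of $v(a_{k-1})$ equals $F_{k-2}$ precisely when $k-1$ is even, so that the two sub-cases alternately realize the Fibonacci-sum equality.
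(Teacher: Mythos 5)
Your argument is correct, but it takes a genuinely different route from the paper. The paper does not prove this lemma from scratch: it uses the matrix identity~\eqref{eqn:b_regular} together with the componentwise domination of $A_i$ by $A_1$ for $i\geqslant 2$ to show that $s_b(\psi_b(n))=s_2(n)$ and that every value of $s_b$ is dominated by a value of $s_2$ at a smaller-or-equal argument having only zeros and ones as digits, and then imports the known maximum of Stern's sequence on $[2^{k-2},2^{k-1})$ from Lehmer and Lind (or simply cites Defant's Proposition~2.1). You instead prove everything directly: your invariant $\max(x,y)\leqslant F_{L+2}$, $\min(x,y)\leqslant F_{L+1}$ for vectors obtained from $(1,1)^T$ by $L$ applications of the $M_i$ gives the upper bound with no reference to the $b=2$ case, and your parity-split induction handles both attainment ($v(a_k)=(F_k,F_{k-1})^T$ for even $k$, $(F_k,F_{k-2})^T$ for odd $k$) and minimality. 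Your approach buys self-containedness at the cost of the two-sub-case minimality analysis; the paper's reduction buys brevity by pushing the entire content onto the classical Stern-sequence result. Two points to tidy in a full write-up: you invoke $v(a_{k-1}-1)$, which is not among the vectors you actually computed --- you need the extra observations that $a_{k-1}-1=b\,a_{k-2}$ when $k$ is even (so $s_b(a_{k-1}-1)=F_{k-2}$ and the candidate $m=a_{k-1}-1$ succeeds) and $a_{k-1}-1=b^2a_{k-3}$ when $k$ is odd (so $s_b(a_{k-1}-1)=F_{k-3}\neq F_{k-2}$ and it fails); and the forcing step $\{s_b(m),s_b(m+1)\}=\{F_{k-1},F_{k-2}\}$ degenerates when $F_{k-1}=F_{k-2}$, so the small values of $k$ must be checked by hand as base cases.
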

Thus by definition $$s_b(a_k)=F_k.$$
We note that $a_k$ has the base $b$-expansion $((10)^{\ell-1} 1)_b$ for $k=2\ell$
and $((10)^{\ell-1} 11)_b$ for $k=2\ell+1$,
therefore this Lemma follows from a result of Defant \cite[Proposition 2.1]{Dpre},
however, this lemma can also be proven directly from the corresponding statement for Stern's sequence (see Lehmer \cite{L1929} and Lind \cite{L1969}).
To do so, one need only notice that the Stern sequence is a subsequence of $\{s_b(n)\}_{n\geqslant 0}$ and that the maximal values occur in this subsequence first.
Explicitly, if we define $\psi_b:\B{N}\to\B{N}$ by $$\psi_b:(\varepsilon_d\ldots\varepsilon_0)_2\mapsto(\varepsilon_d\ldots\varepsilon_0)_b,$$ then setting $$A_0=\m{1&0\\1&1},\quad A_1=\m{1&1\\0&1},\quad\mbox{and}\quad A_2=\cdots=A_{b-1}=\m{0&1\\0&1},$$ we easily see that for $n=(\varepsilon_d\ldots\varepsilon_0)_b$, we have
\begin{equation}\label{eqn:b_regular}
s_b(n)=\m{1&0}A_{\varepsilon_0}\cdots A_{\varepsilon_d}\m{0\\1}.
\end{equation}
From this identity it follows at once that
\begin{equation}\label{eqn:stern_as_subsequence}
s_b(\psi_b(n))=s_2(n).
\end{equation}
Moreover, the matrices $A_i$ are nonnegative and $A_i$ is bounded componentwise by $A_1$ for $i\geqslant 2$. Replacing any such matrix $A_i$ with $i\geqslant 2$ by $A_1$ therefore does not decrease the value of the product in~\eqref{eqn:b_regular}. This proves that
\[
s_b((\varepsilon_d\ldots\varepsilon_0)_b)\leqslant
s_2((\tilde\varepsilon_d\ldots\tilde\varepsilon_0)_2),\]
where $\tilde\varepsilon_i=\min(\varepsilon_i,1)$.
Therefore numbers with only zeros and ones in their base-$b$ expansions dominate the sequence $s_b(n)$, that is, for every $n$ there is an $m\leqslant n$ with only zeros and ones in its base-$b$ expansion such that $s_b(m)\geqslant s_b(n).$
\begin{definition}\label{defak}
For $k\geqslant 0$ set \[  \tilde a_k=\frac{b^k-1}{b^2-1},  \] and
let $h(x):\B{R}_{\geqslant 0}\to\B{R}_{\geqslant 0}$ denote the piecewise linear function connecting the sequence of points $\{(\tilde a_k,F_k)\}_{k\geqslant 0}=(0,0), (1/(b+1),1), (1,1),\ldots$.
\end{definition}
In the following lemma, we collect some necessary properties of $h(x)$.
\begin{lemma}\label{lem:h_properties}
The function $h(x)$ is continuous in $[0,\infty)$, monotonically increasing, and differentiable in the intervals $(\tilde a_k,\tilde a_{k+1})$.
Moreover, for all $x\geqslant 0$, we have 
\begin{equation}\label{eqn:h_recurrence}
h(x)+h(bx+1/(b+1))=h(b^2x+1).
\end{equation}
For $x\in[\tilde a_k,\tilde a_{k+1}]$ we have
\begin{equation}\label{eqn:h_explicit}
h(x)
=F_{k-1}\frac{b+1}{b^k}x-F_{k-1}\frac{b^k-1}{b^k(b-1)}+F_k.
\end{equation}
Moreover the sequence $\{\gamma_k\}_{k\geqslant 2}=(b+1)/b^2,(b+1)/b^3,\ldots$ of slopes of the line segments is nonincreasing.
In particular, the function $h$ is concave in $\B{R}_{\geqslant 1}$.
\end{lemma}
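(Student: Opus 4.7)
The first three properties in the lemma—continuity on $[0,\infty)$, monotonicity, and differentiability on each open piece $(\tilde a_k,\tilde a_{k+1})$—are built into the very definition of $h$ as the piecewise linear interpolant of the points $(\tilde a_k,F_k)$ with the $F_k$ nondecreasing. I would dispatch them with one sentence citing Definition \ref{defak}.

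Next I would derive the explicit formula \eqref{eqn:h_explicit} by computing the slope of $h$ on $[\tilde a_k,\tilde a_{k+1}]$. The two ingredients
\[
\tilde a_{k+1}-\tilde a_k=\frac{b^k}{b+1}\qquad\text{and}\qquad F_{k+1}-F_k=F_{k-1}
\]
are immediate from the definitions, and together they yield slope $\gamma_k=F_{k-1}(b+1)/b^k$ on that piece. Substituting into $h(x)=F_k+\gamma_k(x-\tilde a_k)$ and using $\tilde a_k=(b^k-1)/(b^2-1)$ to simplify the constant term recovers \eqref{eqn:h_explicit}. For slope monotonicity I would compute
\[
\frac{\gamma_{k+1}}{\gamma_k}=\frac{1}{b}\cdot\frac{F_k}{F_{k-1}}\leqslant\frac{2}{b}\leqslant 1
\]
for $k\geqslant 2$, using $F_k/F_{k-1}\leqslant 2$. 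Concavity of $h$ on $[1,\infty)=[\tilde a_2,\infty)$ then follows from the standard fact that a continuous piecewise linear function with nonincreasing slopes on consecutive pieces is concave.

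The substantive part is the functional equation \eqref{eqn:h_recurrence}. My plan is to first record the two arithmetic identities
\[
b\tilde a_k+\frac{1}{b+1}=\tilde a_{k+1},\qquad b^2\tilde a_k+1=\tilde a_{k+2},
\]
both of which drop straight out of $\tilde a_k=(b^k-1)/(b^2-1)$. They show that all three of $h(x)$, $h(bx+1/(b+1))$, and $h(b^2x+1)$ are continuous piecewise linear functions with break set exactly $\{\tilde a_k\}_{k\geqslant 0}$. It is therefore enough to verify the identity at each breakpoint and to match slopes on each piece. Setting $x=\tilde a_k$ collapses \eqref{eqn:h_recurrence} to the Fibonacci recurrence $F_k+F_{k+1}=F_{k+2}$. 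On $[\tilde a_k,\tilde a_{k+1}]$ the slope of the left side is $\gamma_k+b\gamma_{k+1}=(F_{k-1}+F_k)(b+1)/b^k$, while the slope of the right side is $b^2\gamma_{k+2}=F_{k+1}(b+1)/b^k$; these agree again by $F_{k-1}+F_k=F_{k+1}$.

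The only genuine obstacle is the bookkeeping of how the affine maps $x\mapsto bx+1/(b+1)$ and $x\mapsto b^2x+1$ act on the breakpoints of $h$; once the two arithmetic identities above are in hand, every assertion in the lemma reduces either to the Fibonacci recurrence or to a one-line calculation.
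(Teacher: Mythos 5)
Your proposal is correct and follows essentially the same route as the paper: the explicit formula via the slope computation, and the functional equation by observing that the affine maps send breakpoints to breakpoints so both sides are linear on each $[\tilde a_k,\tilde a_{k+1}]$ and reduce to $F_k+F_{k+1}=F_{k+2}$ there (your additional slope check is redundant, since two linear functions agreeing at both endpoints already coincide). The only cosmetic difference is that you prove the slopes are nonincreasing via the elementary bound $F_k\leqslant 2F_{k-1}$ rather than Binet's formula, which is if anything slightly cleaner.
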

\begin{proof}
The statements in the first sentence follow directly from the definition.

To establish the validity of Equation~\eqref{eqn:h_recurrence}, it is enough to realise that on the interval $[\tilde a_k,\tilde a_{k+1}]$, both sides are linear functions, which coincide at the endpoints, since $h(\tilde a_k)+h(\tilde a_{k+1})=F_k+F_{k+1}=F_{k+2}=h(\tilde a_{k+2})$.

The proof of~\eqref{eqn:h_explicit} is by inserting the definition of $\tilde a_k$
into the equation $h(x)=(x-\tilde a_k)(F_{k+1}-F_k)/(\tilde a_{k+1}-\tilde a_k)+F_k$.
Finally, it is an easy consequence of Binet's formula that the sequence of slopes is nonincreasing.
\end{proof}
The piecewise linear function $h(x)$ thus satisfies a recurrence relation resembling that of $s_b(n)$. Using this recurrence, we show that $h(n)$ is, in fact, an upper bound for $s_q(n)$, which is the main tool of our proof.

\begin{lemma}\label{lem:bounded_by_polygon} Assume that $b\geqslant 2$ is an integer.
For all $m\geqslant 0$, we have
\begin{equation}\label{eqn:bounded_by_polygon}
s_b(m)\leqslant h(m).
\end{equation}
Moreover,
\begin{equation}\label{eqn:upper_limit}
\limsup_{m\to\infty}\frac{s_b(m)}{h(m)}=1.
\end{equation}
\end{lemma}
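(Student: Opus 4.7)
I plan to prove (\ref{eqn:bounded_by_polygon}) by strong induction on $m$ and to deduce (\ref{eqn:upper_limit}) afterwards. The base cases are immediate: $s_b(0)=0=h(0)$ and $s_b(1)=1=F_2=h(1)$, since $\tilde a_2=1$. For the inductive step at $m\geqslant 2$, write $m=bn+\varepsilon$ with $\varepsilon\in\{0,\dots,b-1\}$. If $\varepsilon=0$, then $s_b(m)=s_b(n)\leqslant h(n)\leqslant h(m)$ by the inductive hypothesis and monotonicity of $h$; if $\varepsilon\in\{2,\dots,b-1\}$, then $s_b(m)=s_b(n+1)\leqslant h(n+1)\leqslant h(m)$, using $m\geqslant n+1$.

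The only substantive case is $\varepsilon=1$, where $s_b(m)=s_b(n)+s_b(n+1)$. Applying (\ref{eqn:h_recurrence}) at $x=n/b$ yields
\[
h(m)=h(n/b)+h(n+1/(b+1)),
\]
whose right-hand side is strictly smaller than $h(n)+h(n+1)$; thus the naive sum of inductive bounds is too weak. I handle this by further subdividing based on the last base-$b$ digit $j$ of $n$. Writing $n=bq+j$ with $j\in\{0,\dots,b-1\}$, the $s_b$-recurrence gives $s_b(m)$ as one of $2s_b(q)+s_b(q+1)$, $s_b(q)+2s_b(q+1)$, or $2s_b(q+1)$, while (\ref{eqn:h_recurrence}) applied at $x=q+j/b$ yields the matching decomposition
\[
h(m)=h(q+j/b)+h(bq+j+1/(b+1)).
\]
Each subcase thereby reduces to a linear inequality in $s_b(q)$, $s_b(q+1)$, $h(q)$ and $h(q+1)$, which can be checked using the explicit piecewise-linear form (\ref{eqn:h_explicit}) and standard Fibonacci identities.

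The principal obstacle is the tight coupling in the subcases $j\in\{0,1\}$: here each of the slacks $h(q)-s_b(q)\geqslant 0$ and $h(q+1)-s_b(q+1)\geqslant 0$ has room individually, but they must jointly absorb the positive excess $h(q)+h(q+1)-\bigl[h(q+j/b)+h(bq+j+1/(b+1))\bigr]$. Controlling this excess is exactly where the concavity of $h$ on $\B{R}_{\geqslant 1}$ established in Lemma~\ref{lem:h_properties}, together with the explicit slope formulas, plays the decisive role.

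The limsup identity (\ref{eqn:upper_limit}) follows immediately. The bound $\limsup_{m\to\infty}s_b(m)/h(m)\leqslant 1$ is (\ref{eqn:bounded_by_polygon}). For the matching lower bound, formula (\ref{eqn:ak_explicit}) shows that for every $\ell\geqslant 1$ the number $a_{2\ell}=\tilde a_{2\ell}$ is an integer, whence $s_b(a_{2\ell})=F_{2\ell}=h(\tilde a_{2\ell})=h(a_{2\ell})$ and the ratio equals $1$ along this subsequence.
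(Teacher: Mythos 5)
Your outline is structurally close to the paper's proof (induction on $m$, peeling off trailing base-$b$ digits, the functional equation \eqref{eqn:h_recurrence}, concavity), but the decisive case $m=bn+1$ has a genuine gap. In your subcase $j=0$ (so $n=bq$, $m=b^2q+1$) you must show $2s_b(q)+s_b(q+1)\leqslant h(m)=h(q)+h(bq+1/(b+1))$, and you propose to get this from the inductive bounds $s_b(q)\leqslant h(q)$, $s_b(q+1)\leqslant h(q+1)$ plus concavity. That cannot work, because the corresponding inequality for $h$ itself, namely $2h(q)+h(q+1)\leqslant h(q)+h(bq+1/(b+1))$, is false: taking $q=\tilde a_k$ gives $h(q)=F_k$ and $bq+1/(b+1)=\tilde a_{k+1}$, so the right-hand side is $F_k+F_{k+1}$ while the left-hand side exceeds $3F_k$; the excess is of order $(2-\varphi)F_k$ and grows without bound. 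Concretely, for $b=2$ and $q=5$ one has $2h(5)+h(6)=75/8>8=h(21)$, and the true bound $s_2(21)=8\leqslant h(21)=8$ holds only because $s_2(6)=2$ sits well below $h(6)=27/8$. Your induction hypothesis records no such quantitative slack, and concavity of $h$ says nothing about the gap $h(q+1)-s_b(q+1)$, so the ``linear inequality in $s_b(q)$, $s_b(q+1)$, $h(q)$, $h(q+1)$'' you invoke is simply not a consequence of the available hypotheses.

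This is precisely why the paper strengthens the induction hypothesis: after reducing to $m$ whose base-$b$ digits are all $0$ or $1$, it carries along the additional shifted bound $s_b(m)\leqslant h(m-b/(b+1))$ for $m\equiv b+1\bmod b^2$ (equation \eqref{eqn:strengthening}), and it descends three or four digit levels instead of your two, so that each branch of the recursion lands on quantities controlled by either the plain or the shifted bound. Any repair of your argument will need such an auxiliary estimate on suitable residue classes; note that the most naive version, $s_b(\ell)\leqslant h(\ell-b/(b+1))$ for all $\ell\equiv 1\bmod b$, is also false (e.g.\ $s_2(5)=3>h(5-2/3)=11/4$), so the restriction to a class modulo $b^2$ is genuinely needed. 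Your base cases, your treatment of $\varepsilon=0$ and $\varepsilon\in\{2,\dots,b-1\}$, and your deduction of \eqref{eqn:upper_limit} from $a_{2\ell}=\tilde a_{2\ell}$ and $s_b(a_{2\ell})=F_{2\ell}=h(a_{2\ell})$ are all correct and agree with the paper.
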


\begin{proof}
The limit result~\eqref{eqn:upper_limit} follows from the first statement, which yields the inequality ``$\leqslant$'', together with the equality $s_b(a_{2k})=h(a_{2k})$ that holds for all $k\geqslant 0$.
It is sufficient to prove the assertion~\eqref{eqn:bounded_by_polygon} for such $m$ whose $b$-ary expansion consists only of zeros and ones, as for each $n$ there is such an $m$ with $m\leqslant n$ and $s_b(n)\leqslant s_b(m)$; see the comments just before Definition \ref{defak} for details.

We prove the lemma using induction, however, we have to strengthen the induction hypothesis by the additional property that
\begin{equation}\label{eqn:strengthening}
s_b(m)\leqslant h(m-b/(b+1)) \quad\mbox{for}\quad m\equiv b+1\bmod b^2.
\end{equation}
(Remark: in fact we are working with a strictly weaker bound than Coons and Tyler~\cite{CT2014} did for the case $b=2$. They defined the function $h$ by just connecting the maxima at the positions $a_k$.
Our function $h$ hits only those maxima $(a_k,F_k)$ where $2\mid k$, and the others lie to the right of $h$, shifted by $b/(b+1)$.
Therefore our induction hypothesis has to be stronger, but as an exchange for this additional difficulty the tedious proof of Lemma~2.1 in the cited paper can be dispensed with, since our bound $h$ is easier to work with.)

To get the proof started, we first verify the statement for the sixteen integers $m$ given by the $b$-ary expansions $(\varepsilon_3\varepsilon_2\varepsilon_1\varepsilon_0)_b$, where $\varepsilon_i\in\{0,1\}$. The case that $\varepsilon_i=0$ for all $i\in\{0,1,2,3\}$ is trivial.
Concerning the other fifteen indices, by monotonicity of $h$ we only have to consider those positions where a new maximum is attained: we have $s_b((1)_b)=1$, $s_b((11)_b)=2$, $s_b((101)_b)=3$, $s_b((1001)_b)=4$, $s_b((1011)_b)=5$ and these values do not appear before.
Clearly $s_b(1)=1=h(\tilde a_2)=h(1)$ and $s_b(b^2+1)=3=h(\tilde a_4)=h(b^2+1)$. Moreover,
$s_b(b+1)=2=h(\tilde a_3)=h(b+1-b/(b+1))$
and $s_b(b^3+b+1)=5=h(\tilde a_5)=h(b^3+b+1-b/(b+1))$, so that~\eqref{eqn:strengthening} is satisfied for these latter two positions.
Finally, we have $s_b(b^3+1)=4$ and by~\eqref{eqn:h_explicit} it is not difficult to show that $h(b^3+1)=5-2/b^2\geqslant 4$.

For the induction step, let $k\geqslant 5$ and assume that~\eqref{eqn:bounded_by_polygon} and~\eqref{eqn:strengthening} hold for all $m<b^{k-1}$ having only zeros and ones as $b$-ary digits. Let $b^{k-1}\leqslant m<b^k$, where $m$ satisfies the same digit restriction.
We distinguish between six cases.
If $m=bj$, where $j\geqslant 1$, then $$s_b(bj)=s_b(j)\leqslant h(j)\leqslant h(bj),$$ where the last inequality is true by the monotonicity of $h(x)$. 
The case for $m=bn+1$ for some $n$ is more involved, and it is this case for which we need the special form of the function $h$.
We start with the case $m\equiv 1\bmod b^3$, that is, $m=b^3j+1$, where $j\geqslant b$ since $m\geqslant b^4$.
Using the recurrence for $s_b$, the induction hypothesis, monotonicity of $h$ and the identity~\eqref{eqn:h_recurrence}, in this order, we obtain
\begin{align*}
s_b(b^3j+1)&=s_b(b^2j)+s_b(b^2j+1)
\\&=s_b(j)+s_b(bj)+s_b(bj+1)
\\&=2s_b(j)+s_b(bj+1)
\\&\leqslant 2h(j)+h(bj+1)
\\&\leqslant h(j)+h(j+1/(b+1))+h(bj+1)
\\&=h(j)+h(b^2j+b+1/(b+1)).
\end{align*}
The first case $m\equiv 1\bmod b^3$ is finished as soon as we prove that this last expression is bounded from above by 
\[h(bj)+h(b^2j+1/(b+1))=h(b^3j+1).\]
This estimate follows from the concavity of $h$ in $\B{R}_{\geqslant 2}$, comparing the arguments of the function $h$: we have $j\geqslant b\geqslant 2$, and therefore $bj-j\geqslant b=b^2j+b+1/(b+1)-(b^2+1/(b+1))$, which yields the statement.

By a very similar argument we treat the case $m=b^4j+b+1$, however we consider the case $j=1$ separately. Using~\eqref{eqn:stern_as_subsequence} and~\eqref{eqn:h_explicit}, we get in this case
\[s_b(b^4+b+1)=s_2(19)=7\leqslant 8-3/b^2=h(b^4+b+1/(b+1)),\] as required by~\eqref{eqn:strengthening}.
Assume now that $j\geqslant 2$.
We have (note that the argument leading to the second row works equally well for $b=2$)
\begin{align*}
s_b(b^4j+b+1)&=s_b(b^3j+1)+s_b(b^3j+2)
\\&=s_b(b^3j+1)+s_b(b^2j+1)
\\&=s_b(j)+2s_b(b^2j+1)
\\&\leqslant h(j)+2h(b^2j+1)
\end{align*}
and an analogous argument as before, using the concavity of $h$, shows that this can be estimated by
\begin{align*}
h(bj)+2h(b^2j+1/(b+1))&=h(b^2j+1/(b+1)+h(b^3j+1)
\\&=h(b^4j+b+1-b/(b+1)).
\end{align*}

The remaining three cases are simpler as we do not need the concavity of $h$ for them.
We have
\begin{align*}
s_b(b^4j+b^3+b+1)&=s_b(b^3j+b^2+1)+s_b(b^3j+b^2+2)
\\&=s_b(b^3j+b^2+1)+s_b(b^2j+b+1)
\\&\leqslant h(b^3j+b^2+1)+h(b^2j+b+1/(b+1))
\\&=h(b^4j+b^3+b+1-b/(b+1)),
\end{align*}
\begin{align*}
s_b(b^3j+b^2+1)&=s_b(bj+1)+s_b(b^2j+b+1)
\\&\leqslant h(bj+1)+h(b^2j+b+1/(b+1))
\\&=h(b^3j+b^2+1)
\end{align*}
and
\begin{align*}
s_b(b^3j+b^2+b+1)&=s_b(b^2j+b+1)+s_b(b^2j+b+2)
\\&=s_b(b^2j+b+1)+s_b(bj+2)
\\&=s_b(b^2j+b+1)+s_b(j+1)
\\&\leqslant h(b^2j+b+1/(b+1))+h(j+1)
\\&\leqslant h(b^2j+b+1/(b+1))+h(bj+1)
\\&=h(b^3j+b^2+1)
\\&\leqslant h(b^3j+b^2+b+1-b/(b+1)),
\end{align*}
which settles the cases $m\equiv b^3+b+1\bmod b^4$, $m\equiv b^2+1\bmod b^3$ and $m\equiv b^2+b+1\bmod b^3$ respectively.
Note that all of these calculations are also valid for $b=2$.

The residue classes considered cover the set of nonnegative integers with $b$-ary expansions containing only zeros and ones, and therefore $$s_b(m)\leqslant h(m)$$ for all $m\geqslant 0$.
\end{proof}

\begin{lemma}\label{lem:continuous_upper_bound}
If
\[H(x)=\frac{(b^2-1)^{\log_b \gf}}{\sqrt{5}}x^{\log_b \varphi},\]
then
\[  \limsup_{x\rightarrow \infty}\,(h-H)(x)=0,  \]
 and specifically,
\[\lim_{k\rightarrow \infty}|(h-H)(a_k)|=0.\]
\end{lemma}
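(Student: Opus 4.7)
The plan is to reduce both claims to an asymptotic comparison at the breakpoints $\tilde a_k$ of $h$, and then to use concavity of $H$ to propagate the estimate to all $x$. The constant $(b^2-1)^{\log_b\gf}/\sqrt 5$ in the definition of $H$ is engineered precisely so that the denominator of $\tilde a_k = (b^k-1)/(b^2-1)$ cancels, yielding the clean identity
\[
H(\tilde a_k) = \frac{(b^k-1)^{\log_b\gf}}{\sqrt 5} = \frac{\gf^k(1-b^{-k})^{\log_b\gf}}{\sqrt 5}.
\]
A Taylor expansion of $(1-b^{-k})^{\log_b\gf}$ together with Binet's formula $F_k = (\gf^k - (-\gf)^{-k})/\sqrt 5$ shows that $H(\tilde a_k) - F_k$ is of order $(\gf/b)^k + \gf^{-k}$, both of which tend to zero because $b \geqslant 2 > \gf$. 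In particular, $(h-H)(\tilde a_k) = F_k - H(\tilde a_k) \to 0$.

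To pass from the vertices to arbitrary $x$, I would invoke concavity: since $\alpha := \log_b\gf \in (0,1)$, the function $H(x) = c\,x^{\alpha}$ is strictly concave, so on each interval $[\tilde a_k,\tilde a_{k+1}]$ the chord $L_k$ joining $(\tilde a_j,H(\tilde a_j))$ for $j=k,k+1$ satisfies $L_k \leqslant H$. Because $h$ is itself linear on this interval, $h - L_k$ is linear in $x$ and its endpoint values are exactly $(h-H)(\tilde a_k)$ and $(h-H)(\tilde a_{k+1})$, both of which tend to zero by the first paragraph. Hence
\[
\sup_{x\in[\tilde a_k,\tilde a_{k+1}]}(h-H)(x) \;\leqslant\; \sup_{x\in[\tilde a_k,\tilde a_{k+1}]}(h-L_k)(x) \;\longrightarrow\; 0,
\]
so $\limsup_{x\to\infty}(h-H)(x)\leqslant 0$. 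Combining this with $(h-H)(\tilde a_k)\to 0$, which provides $\limsup \geqslant 0$, proves the first statement.

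The second conclusion reduces to the first whenever $k$ is even, since $a_k = \tilde a_k$ by~\eqref{eqn:ak_explicit}. For odd $k$ the point $a_k = \tilde a_k + b/(b+1)$ still lies in $[\tilde a_k,\tilde a_{k+1}]$, and a direct computation via~\eqref{eqn:h_explicit} yields $h(a_k) - F_k = F_{k-1}/b^{k-1}$, while the same Taylor expansion applied to $H(a_k)$ (now with $(b^2-1)a_k = b^k + b^2 - b - 1$ in place of $b^k-1$) gives $H(a_k) - F_k = O((\gf/b)^k)$. Both quantities are $o(1)$, so $|(h-H)(a_k)| \to 0$. The substantive step is the Taylor/Binet comparison in the first paragraph; once the identity $H(\tilde a_k) = (b^k-1)^{\log_b\gf}/\sqrt 5$ is isolated, the remainder is routine bookkeeping with a concave function and carries no conceptual obstacle.
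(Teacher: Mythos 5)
Your proof is correct, and it rests on the same two pillars as the paper's argument --- concavity of a power function and Binet's formula evaluated at the breakpoints $\tilde a_k$ --- but you organize the reduction differently. The paper introduces two auxiliary objects: a piecewise linear function $\tilde h$ interpolating the points $(\tilde a_k,\gf^k/\sqrt 5)$ and the shifted function $\tilde H(x)=H(x+1/(b^2-1))$, which passes \emph{exactly} through those points; concavity then gives $\tilde h\leqslant\tilde H$ globally, and the lemma follows from the two error bounds $|h-\tilde h|\leqslant \gf^{-k}/\sqrt 5$ and $|\tilde H-H|\leqslant Cx^{-\eta}$. You dispense with both auxiliary functions: you estimate $H(\tilde a_k)-F_k$ directly by expanding $(1-b^{-k})^{\log_b\gf}$, and you replace the global inequality $\tilde h\leqslant\tilde H$ by the local observation that on each $[\tilde a_k,\tilde a_{k+1}]$ the chord of $H$ lies below $H$ while $h$ minus that chord is linear, hence maximized at an endpoint. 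The paper's shift trick buys exact interpolation at the cost of two extra comparison functions; your version absorbs the same correction into a Taylor expansion and keeps everything in terms of $h$ and $H$ themselves. The verification of the odd-index case of the second claim, $h(a_k)-F_k=F_{k-1}/b^{k-1}$ and $(b^2-1)a_k=b^k+b^2-b-1$, is also correct, and your argument rightly uses $b\geqslant 2>\gf$ to make $(\gf/b)^k$ decay. Both routes are sound; yours is marginally more self-contained, the paper's marginally slicker.
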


\begin{proof}
Let $\tilde h(x)$ be the piecewise linear function satisfying
$\tilde h(\tilde a_k)=\varphi^k/\sqrt{5}$ for $k\geqslant 0$, and set
\[\tilde H(x)=\frac{((b^2-1)x+1)^{\log_b \varphi}}{\sqrt{5}}=H(x+1/(b^2-1)).\]
Then $H(x)$ is a concave function that passes through the points $\bigl(\tilde a_k,\varphi^k/\sqrt{5}\bigr)$. It follows that $\tilde h(x)\leqslant \tilde H(x)$ for $x\geqslant 0$, where we have equality when $x=\tilde a_k$.
Moreover, if $\tilde a_k\leqslant x\leqslant \tilde a_{k+1}$, by Binet's formula $|\tilde h(x)-h(x)|\leqslant \varphi^{-k}/\sqrt{5}$.
Furthermore, the mean value theorem gives $|\tilde H(x)-H(x)|\leqslant C x^{-\eta}$ for some $C$ and some $\eta>0$.
Combining these three estimates yields the first statement.

We easily get $\lim_{k\rightarrow \infty}|(h-H)(\tilde a_k)|=0$ by a similar argument. Combining this with $|a_k-\tilde a_k|\leqslant 1$, $|h(x+t)-h(x)|\rightarrow 0$, and $|H(x+t)-H(x)|\rightarrow 0$ for $x\rightarrow\infty$ gives the last statement of the lemma.
\end{proof}

\section{Proof of the main result}

\begin{proof}[Proof of Theorem \ref{main}]
For brevity, we write $c_b=\varphi^{\log_b(b^2-1)}/\sqrt{5}$.
By Lemma~\ref{lem:continuous_upper_bound}, noting that $h(x)$ increases to infinity, and applying Lemma~\ref{lem:bounded_by_polygon}, we get
\[
\limsup_{m\rightarrow\infty}
\frac{s_b(m)}{c_bm^{\log_b\gf}}
=\limsup_{m\rightarrow\infty}
\frac{s_b(m)}{H(m)}
\leqslant \limsup_{m\rightarrow\infty}
\frac{s_b(m)}{h(m)}\leqslant 1.
\]
Similarly, using the second part of Lemma~\ref{lem:continuous_upper_bound} and again applying Lemma~\ref{lem:bounded_by_polygon}, we have
\[
\limsup_{m\rightarrow\infty}
\frac{s_b(m)}{H(m)}
\geqslant
\limsup_{k\rightarrow\infty}
\frac{s_b(a_k)}{H(a_k)}
=\limsup_{k\rightarrow\infty}
\frac{s_b(a_k)}{h(a_k)}=1,
\]
which proves the theorem.
\end{proof}

\section{Concluding remarks}

In this paper, we gave a short proof determining the maximal order of the number $s_b(n)$ of hyper-($b$-ary)-expansions of a nonnegative integer $n-1$ for general integral bases $b\geqslant 2$. Our proof was based on considering the finite number of specific recurrences satisfied by $s_b(n)$ over arithmetic progressions $bn+i$, and constructing a piecewise linear function approximating those recurrences. 

Functions satisfying recurrences like those satisfied by $s_b(n)$ are plentiful in the literature, and often given special attention by number theorists and theoretical computer scientists; see Allouche and Shallit's work on $b$-regular sequences \cite{AS1992, AS2003} for a general treatment and several specific examples. The recurrences satisfied by $b$-regular sequences lead to a matrix classification, which has proven to be very useful; here the usefulness comes by recognising that the Stern sequence $s_2(n)$ is the dominating subsequence of $s_b(n)$ (for each $b\geqslant 2$), and that the growth properties of the Stern sequence determine the growth properties of $s_b(n)$. 

We end by pointing the interested reader to the extremely useful result of Allouche and Shallit \cite[Theorem 2.2]{AS1992}, which formalises this relationship.

\begin{theorem}[Allouche and Shallit] The integer sequence $\{f(n)\}_{n\geqslant 0}$ is $b$-regular if and only if there exist positive integers $m$ and $d$, matrices ${\bf A}_0,\ldots,{\bf A}_{b-1}\in \B{Z}^{d\times d}$, and vectors ${\bf v},{\bf w}\in \B{Z}^d$ such that $$f(n)={\bf w}^T {\bf A}_{i_0}\cdots{\bf A}_{i_s} {\bf v},$$ where $n=({i_s}\cdots {i_0})_b.$
\end{theorem}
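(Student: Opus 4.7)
The plan is to prove both implications using the standard definition that $\{f(n)\}_{n\geqslant 0}$ is $b$-regular precisely when the $\B{Z}$-module $\C{M}$ generated by all subsequences $g_{k,r}(n)=f(b^kn+r)$, with $k\geqslant 0$ and $0\leqslant r<b^k$, is finitely generated.

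For the direction $(\Leftarrow)$, assume the matrix representation holds. Introduce the vector-valued sequence $\mathbf{F}(n)=\mathbf{A}_{i_0}\cdots\mathbf{A}_{i_s}\mathbf{v}$ for $n=(i_s\cdots i_0)_b$. Appending the least significant digit $j$ to $n$ produces $bn+j$, from which one reads off the vector recurrence $\mathbf{F}(bn+j)=\mathbf{A}_j\mathbf{F}(n)$. Iterating with $r=(r_{k-1}\cdots r_0)_b$ yields $\mathbf{F}(b^kn+r)=\mathbf{A}_{r_0}\cdots\mathbf{A}_{r_{k-1}}\mathbf{F}(n)$, so every defining subsequence
\[  f(b^kn+r)=\mathbf{w}^T\mathbf{A}_{r_0}\cdots\mathbf{A}_{r_{k-1}}\mathbf{F}(n)  \]
is a $\B{Z}$-linear combination of the $d$ component sequences $F_1,\ldots,F_d$. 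Hence $\C{M}\subseteq\spn_{\B{Z}}(F_1,\ldots,F_d)$ is finitely generated.

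For the converse, suppose $f$ is $b$-regular. The module $\C{M}$ is a finitely generated, torsion-free $\B{Z}$-module (being a submodule of $\B{Z}^{\B{N}}$), hence free of some finite rank $d$ by the structure theorem for modules over a PID; fix a $\B{Z}$-basis $f_1,\ldots,f_d$. The decisive point is that $\C{M}$ is stable under each shift $g\mapsto(n\mapsto g(bn+j))$: applying this shift to a generator $g_{k,r}$ yields $g_{k+1,\,b^kj+r}$, which is itself one of the defining generators since $0\leqslant b^kj+r<b^{k+1}$. Consequently, each $f_i(bn+j)\in\C{M}$ has a unique expansion $\sum_k(\mathbf{A}_j)_{ik}f_k(n)$ with integer coefficients, defining matrices $\mathbf{A}_0,\ldots,\mathbf{A}_{b-1}\in\B{Z}^{d\times d}$. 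Setting $\mathbf{v}=(f_1(0),\ldots,f_d(0))^T$ and writing $f=\sum_iw_if_i\in\C{M}$ with $w_i\in\B{Z}$, the vector sequence $\mathbf{F}(n)=(f_1(n),\ldots,f_d(n))^T$ satisfies $\mathbf{F}(bn+j)=\mathbf{A}_j\mathbf{F}(n)$, which unrolls to $\mathbf{F}(n)=\mathbf{A}_{i_0}\cdots\mathbf{A}_{i_s}\mathbf{v}$; contracting with $\mathbf{w}$ yields the desired identity.

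The main obstacle is maintaining integrality: one must produce matrices in $\B{Z}^{d\times d}$ and vectors in $\B{Z}^d$ rather than in $\B{Q}^{d\times d}$ and $\B{Q}^d$. This hinges on the fact that $\B{Z}$ is a PID, which guarantees a genuine $\B{Z}$-basis of the finitely generated torsion-free module $\C{M}$; once such a basis is fixed, the coordinates of every element of $\C{M}$, including each shifted basis element and $f$ itself, are automatically integers, and the representation has the claimed form.
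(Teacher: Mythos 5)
This theorem is not proved in the paper at all: it is quoted in the concluding remarks as a known result, with a citation to Allouche and Shallit \cite[Theorem~2.2]{AS1992}, so there is no internal proof to compare against. Your argument is essentially the standard proof from that reference and is correct in substance: the reverse direction reads the kernel sequences off the vector recurrence $\mathbf{F}(bn+j)=\mathbf{A}_j\mathbf{F}(n)$, and the forward direction uses that the $b$-kernel module is stable under the section maps $g\mapsto(n\mapsto g(bn+j))$ together with freeness of a finitely generated torsion-free $\B{Z}$-module to get integral matrices. Two small points deserve attention. First, in the $(\Leftarrow)$ direction the identity $\mathbf{F}(b^kn+r)=\mathbf{A}_{r_0}\cdots\mathbf{A}_{r_{k-1}}\mathbf{F}(n)$ can fail at $n=0$ unless one imposes the leading-zero convention $\mathbf{A}_0\mathbf{v}=\mathbf{v}$; the fix is harmless (the kernel then lies in $\spn_{\B{Z}}(F_1,\ldots,F_d)$ plus the span of a sequence supported at $0$, which is still finitely generated, and a submodule of a finitely generated $\B{Z}$-module is finitely generated since $\B{Z}$ is Noetherian --- a fact you implicitly use and should state). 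Second, in the $(\Rightarrow)$ direction your construction automatically yields $\mathbf{A}_0\mathbf{v}=\mathbf{v}$ from the recurrence at $n=j=0$, so the representation is independent of leading zeros; it is worth noting this explicitly, since otherwise the unrolling $\mathbf{F}(n)=\mathbf{A}_{i_0}\cdots\mathbf{A}_{i_s}\mathbf{v}$ depends on which representation of $n$ one takes.
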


\bibliographystyle{amsplain}
\providecommand{\bysame}{\leavevmode\hbox to3em{\hrulefill}\thinspace}
\providecommand{\MR}{\relax\ifhmode\unskip\space\fi MR }
\providecommand{\MRhref}[2]{%
  \href{http://www.ams.org/mathscinet-getitem?mr=#1}{#2}
}
\providecommand{\href}[2]{#2}


\end{document}